\newtheorem{theorem}{Theorem}[section]
\newtheorem{lemma}[theorem]{Lemma}
\newtheorem{corollary}[theorem]{Corollary}
\newtheorem*{claimnonum}{Claim}
\theoremstyle{definition}
\newtheorem{definition}[theorem]{Definition}
\newtheorem{remark}[theorem]{Remark}
\newcommand{\N}{\ensuremath{\mathbb{N}}}
\newcommand{\Z}{\ensuremath{\mathbb{Z}}}
\newcommand{\CC}{\ensuremath{\mathbb{C}}}
\newcommand{\V}{\ensuremath{\mathbb{V}}}
\renewcommand{\int}{\ensuremath{[0,1]}}
\DeclareMathOperator{\F}{\mathscr{F}} 
\DeclareMathOperator{\U}{\mathscr{E}} 
\DeclareMathOperator{\M}{\mathrm{M}} 
\renewcommand{\hom}{\operatorname{\mathsf{hom}}} 
\renewcommand{\lim}{\operatorname{\mathsf{lim}}} 
\newcommand{\diaA}{\ensuremath{(A_{i}, a_{ij}) _{i,j\in I}}}
\newcommand{\seqA}{\ensuremath{(A_{i}, a_{i}) _{i\in \N}}}  
\newcommand{\seqB}{\ensuremath{(B_{k}, b_{k}) _{k\in \N}}}
\renewcommand{\a}{\overline{a}} 
\renewcommand{\b}{\overline{b}} 
\newcommand{\id}{\ensuremath{\textrm{id}}}
\newcommand{\norm}[1]{\left\lVert#1\right\rVert}
\newcommand{\restrict}[1]{{\raise-.7ex\hbox{\ensuremath{\mathbin\upharpoonright}}\raise-1.5ex\hbox{\scriptsize{$#1$}}}}
\renewcommand{\leq}{\leqslant}
\renewcommand{\geq}{\geqslant}
\newcommand{\C}{\ensuremath{\mathsf{C}}} 
\newcommand{\Cat}{\mathsf{C}} 
\newcommand{\Set}{\mathsf{Set}} 
\renewcommand{\U}{\mathscr{U}} 
\begin{document}
\begin{frontmatter}
\title{Two isomorphism criteria for directed colimits}
\author[milano]{Vincenzo Marra}
\ead{vincenzo.marra@unimi.it}
\author[salerno,adam]{Luca Spada}
\ead{lspada@unisa.it}
\address[milano]{Dipartimento di Matematica ``Federigo Enriques''. Università degli Studi di Milano. Via C. Saldini 50, 20133 Milano, Italy.}
\address[salerno]{Dipartimento di Matematica. Università degli Studi di Salerno. Via Giovanni Paolo II 132, 84084 Fisciano (SA), Italy.}
\address[adam]{ILLC, Universiteit van Amsterdam, Science Park 107, 1098XG Amsterdam, The Netherlands.}


\begin{keyword} Filtered colimit \sep directed colimit \sep direct limit \sep finitely presentable object \sep finitely generated object \sep partially ordered Abelian group \sep dimension group \sep AF $C^*$-algebra  \sep Bratteli-Elliott Isomorphism Criterion \sep variety of algebras.
\MSC[2010]{Primary: 20F05.
 Secondary:  18A30 \sep 03C05 \sep 06F20 \sep 47L40.

}
\end{keyword}

\begin{abstract}
Using the general notions of finitely presentable and finitely generated object introduced by Gabriel and Ulmer in 1971, we prove that, in any (locally small) category, two sequences of finitely presentable objects and morphisms (or two sequences of finitely generated objects and monomorphisms) have isomorphic colimits (=direct limits) if, and only if, they are \emph{confluent}. The latter means  that the two given sequences  can be connected by a back-and-forth chain of morphisms that is cofinal on each side, and commutes with the sequences at each finite stage. In several concrete situations,  analogous isomorphism criteria are typically obtained by \emph{ad hoc} arguments.  The abstract results given here can  play the useful  r\^{o}le of discerning  the general from the specific in situations of actual interest. We illustrate by applying them to varieties  of algebras, on the one hand, and to \emph{dimension groups}---the ordered $K_{0}$ of approximately finite-dimensional  $C^{*}$-algebras---on the other. The first application encompasses such classical examples as Kurosh's isomorphism criterion for countable torsion-free Abelian groups of finite rank. The second application yields the Bratteli-Elliott  Isomorphism Criterion for dimension groups. Finally, we  discuss  Bratteli's original isomorphism criterion for approximately finite-dimensional $C^{*}$-algebras, and show that his result does not follow from ours.\end{abstract}

\end{frontmatter}
\section{Introduction, and statement of main result.}\label{s:intro}
In his pioneering work on Abelian categories, Gabriel investigated a categorial abstraction of the classical notion of Noetherian module \cite[Chapitre II.4]{gabriel}. This was one source of inspiration that later led Gabriel and Ulmer to define the concepts of finitely presentable and finitely generated object in any locally small category \cite[6.1]{gu}. (Cf.\ also the English summary in \cite{ulmer}.) Gabriel and Ulmer showed that, in an algebraic context, the two concepts agree with the standard notions of finitely presented and finitely generated algebra, respectively; see Lemma \ref{lem:var} below. The purpose of this note is to show that the Gabriel-Ulmer generalisation affords two widely applicable isomorphism criteria for directed colimits of sequences of objects and morphisms in an arbitrary category. In several concrete situations,  analogous isomorphism criteria are typically obtained by \emph{ad hoc} arguments.  The  results given here can thus play the   r\^{o}le of discerning  the general from the specific in situations of actual interest. To corroborate this  statement, in Section \ref{s:app} we shall give two applications of the  isomorphism criteria; we defer further remarks until then. Now we  turn to the statement of our main result, for which we recall a number of definitions.

Throughout, we let $\N:=\{1,2,3,\ldots\}$, and we let $\Set$ denote the category of sets and functions. Also, we always let $\Cat$ denote a locally small category, i.e.\ a category such that the class $\hom{(A,B)}$ of morphisms between any two given objects $A,B$ of $\Cat$ is a set. We  recall the standard notion of directed colimit (=filtered colimit over a directed index set) in a category;\footnote{While in category theory most authors work with filtered colimits, it is no loss of generality to restrict attention to directed colimits, as we shall do here after \cite{jacobson}. For a proof of the equivalence of the two notions, see e.g.\ \cite[Theorem 1.5]{ar}.} see e.g.\ \cite[Section 2.5]{jacobson}, where, following  a long-standing algebraic tradition, `directed colimits' are called `direct limits'.

 A set $I$ partially ordered by $\leq$ is (\emph{upward}) \emph{directed} if for any $i,j\in I$ there exists $k \in I$ with $i,j\leq k$. A \emph{directed diagram} in $\C$ is a pair $(B_i,b_{ij})$, where $i,j\in I$ and $i\leq j$, such that $B_{i}$ is a $\C$-object for each $i \in I$, and $b_{ij}\colon B_{i}\to B_{j}$ is a $\C$-arrow for each $i\leq j$. We call the $b_{ij}$'s the \emph{transition morphisms}.\footnote{Other names used in the literature include \emph{bonding} and \emph{connecting} maps.} 
 A \emph{cocone} for the diagram $(B_{i}, b_{ij})$  is a $\C$-object $B$ equipped with $\C$-arrows $\b_{i}\colon B_{i}\to B$,  one for each $i \in I$, that satisfy the commutativity relations $\b_{i}=\b_{j}\circ b_{ij}$, for each $i,j \in I$ with $i\leq j$. A \emph{colimit} in $\C$ of the  diagram  $(B_{i},b_{ij})$ is a \emph{universal cocone}  $(B,\b_{i})$: one such that, for any other cocone $(B',\b'_{i})$, there is a unique $\C$-arrow $f\colon B\to B'$ satisfying $\b'_{i}=f \circ \b_{i}$ for each $i \in I$. The $\C$-arrows $\b_{i}$ of the universal cocone are called the \emph{colimit morphisms}, and $B$ is the \emph{colimit object}. The universal property of course entails that directed colimits are unique to within a unique $\C$-isomorphism. 

Following \cite[6.1]{gu}, we say that a $\C$-object $A$  is \emph{finitely presentable} if the covariant hom-functor $\hom{(A,-)}$$\colon$ $\Cat \to \Set$ preserves directed colimits.
Explicitly, this means that if  $(B,\b_{i})$ is the colimit in $\C$ of the directed diagram $(B_{i},b_{ij})$,  then for every $\C$-arrow $f \colon A \to B$ the following two conditions are satisfied. (See Fig.~\ref{fig:fp}.)
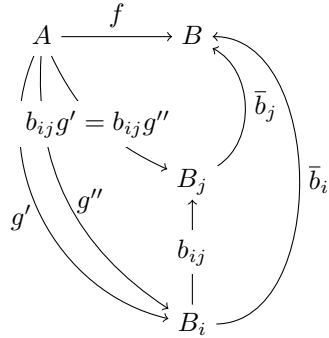
\begin{figure}[h!]\begin{center}
\begin{tikzpicture}[descr/.style={fill=white}]
\matrix(m)[matrix of math nodes, row sep=4em, column sep=4em, text height=1.5ex, text depth=0.25ex, ampersand replacement=\&]{
A\& B\\\& B_j\\\&B_i\\
};
\path[<-] (m-2-2) edge node[descr] {$b_{ij}$} (m-3-2);
\path[->] (m-1-1) edge node[above] {$f$} (m-1-2);
\path[->] (m-1-1) edge  [bend right=50] node[left] {$g'$} (m-3-2);
\path[->] (m-1-1) edge  [bend right=30] node[right] {$g''$} (m-3-2);
\path[->] (m-1-1) edge    [bend right=20] node[descr] {$b_{ij}g'=b_{ij}g''$} (m-2-2);
\path[->] (m-2-2) edge [bend right=60] node[right] {$\b_j$} (m-1-2);
\path[->] (m-3-2) edge [bend right=90] node[right] {$\b_i$} (m-1-2);
\end{tikzpicture}
\end{center}
\caption{Finite presentability according to Gabriel and Ulmer.}
\label{fig:fp}
\end{figure}
\begin{align}
& \text{There is } g \colon A \to B_i \text{ such that } f = \b_{i}\circ g.  \tag{F}\label{eq:factorization}\\
& \text{For any } g',g'' \colon A \to B_i \text{ such that } f = \b_{i}\circ g'=\b_{i}\circ g'', \text{ there is } j\geq i \text{ such that } b_{ij}\circ g'=b_{ij}\circ g''. \tag{E}\label{eq:essential-unique}
\end{align}
Here, (\ref{eq:factorization}) is known as the \emph{factorisation property}, and (\ref{eq:essential-unique}) as the \emph{essential uniqueness} property.
Again after \cite[6.1]{gu}, we say that $A$  is \emph{finitely generated} if  $\hom{(A,-)}\colon \Cat \to \Set$ preserves directed colimits of diagrams $(B_{i}, b_{ij})$ all of whose transition morphisms $b_{ij}$ are monomorphisms in $\C$.
\begin{remark}\label{rem:e}In the literature, the following  condition is often used  in place of (\ref{eq:essential-unique}):
\begin{align}
& \text{If }g' \colon A \to B_i \text{ is such that } f = \b_{i}\circ g', \text{ there is } j\geq i \text{ such that } b_{ij}\circ g=b_{ij}\circ g', \tag{E$'$}\label{eq:essential-uniquebis}
\end{align}
where $g \colon A \to B_i$ is the arrow whose existence is granted by \eqref{eq:factorization}.
Cf.\ e.g.\ \cite[p.\ 9]{ar}. But it is easy to check that $f \colon A \to B$ satisfies  (\ref{eq:factorization}) and (\ref{eq:essential-unique}) if, and only if, it satisfies (\ref{eq:factorization}) and (\ref{eq:essential-uniquebis}). It will transpire from Section \ref{s:proof} that    (\ref{eq:essential-unique}) is technically expedient  in this note, insofar as it does not depend on \eqref{eq:factorization}.
\end{remark}
We next specialise directed diagrams to sequences. By a \emph{sequence \textup{(}of objects and morphisms\textup{)}} in  $\C$ we mean  a pair $\seqA$ with $A_{i}$ a $\Cat$-object, and $a_{i}\colon A_{i}\to A_{i+1}$ a $\Cat$-arrow. For any two integers $0<i<j$ we define the $\Cat$-arrow $a_{ij}\colon A_{i}\to A_{j}$ as
\begin{align*}
a_{ij}:=a_{j-1}\circ\cdots\circ a_{i+1}\circ a_{i},
\end{align*}
and we also set $a_{ii}$ equal to the identity arrow on $A_{i}$.
By the \emph{colimit} in $\C$ of such a sequence we   mean the colimit of the directed diagram $(A_{i},a_{ij})$, with ${i,j\in \N}$.
In the central definition that follows we consider a second sequence $\seqB$ with associated directed diagram $(B_{k},b_{kl})$, ${k,l\in\N}$.
\begin{definition}[Confluent sequences]\label{d:confluence}We say that two sequences
 $\seqA$ and $\seqB$  in the category $\C$ are \emph{confluent} if there exist integers $0<i_{1}<i_{2}<\cdots$,  $0<k_{1}<k_{2}<\cdots$, together with $\C$-arrows $f_n\colon A_{i_{n}}\to B_{k_{n}}$ and $g_n\colon B_{k_{n}}\to A_{i_{n+1}}$ for each $n\in\N$, such that
 the commutativity relations
 \begin{align}
a_{i_{n}i_{n+1}}&=g_{n}\circ f_{n}  \label{eq:comm1} \\ 
b_{k_{n}k_{n+1}}&=f_{n+1}\circ g_{n} \label{eq:comm2}
 \end{align}
 hold. (See Fig.\ \ref{f:confluence}.) 

\end{definition}
\begin{figure}[h!]
\begin{center}
\begin{tikzpicture}[descr/.style={fill=white}]
\matrix(m)[matrix of math nodes, row sep=4em, column sep=3em, ampersand replacement=\&]{
{\ldots}		\& {A_{i_{n}}}		\&  {{A_{i}}} \&  {A_{i_{n+1}}}	\& {\ldots}\\
{\ldots}		\& {{B_{k}}}		\& {B_{k_{n}}} \& {B_{k_{n+1}}}	\& {\ldots}\\
};
\draw [->] (m-1-2) edge  node[descr] {$f_{n}$}  (m-2-3);
\draw [->] (m-2-3) edge  node[descr] {$g_{n}$}  (m-1-4);
\draw [->] (m-1-4) edge  node[descr] {$f_{n+1}$}  (m-2-4);
\draw [->] (m-1-2) edge node[above] {$a_{i_{n}}$} (m-1-3);
\draw [->] (m-1-3) edge node[above] {$a_{i}$} (m-1-4);
\path[->] (m-1-2) edge  [bend left=40] node[descr] {$a_{i_{n}i_{n+1}}$} (m-1-4);
\draw [->] (m-2-2) edge node[below] {$b_{k}$} (m-2-3);
\draw [->] (m-2-3) edge node[below] {$b_{k_{n}\,k_{n+1}}$} (m-2-4);
\end{tikzpicture}
\end{center}
\caption{A confluence between the sequences $\seqA$ and $\seqB$. (See Definition \ref{d:confluence}.)}
\label{f:confluence}
\end{figure}

We shall see in Theorem \ref{t:confluence-implies-iso} below that, in full generality, confluence of sequences implies isomorphism of the colimit objects. The converse, however, fails even in the best behaved categories\footnote{We thank Benno van den Berg for suggesting the following stripped down example.}. In $\Set$, write $\N$ as a colimit of the sequence $\emptyset \hookrightarrow \{1\}\hookrightarrow \{1,2,\ldots\} \hookrightarrow \cdots$. Further write $\N$ as the colimit of the constant sequence $\N\rightarrow \N\rightarrow \cdots$, where each arrow is the identity function. Then it is clear that the sequences are not confluent. The second sequence does not consist of finitely presentable or finitely generated objects. Indeed, one checks that finitely presentable objects in $\Set$ coincide with the finitely generated ones, and they are precisely the finite sets. The main result of this note is that confluence is equivalent to isomorphism of the colimit objects under appropriate finiteness assumptions:
\begin{theorem}\label{t:confluence=iso}Let $\C$ be any locally small category.  Suppose two sequences $\seqA$ and $\seqB$ 
in $\C$  admit colimit objects $A$ and $B$ in $\C$, respectively. 
\begin{enumerate}
\item\label{t:confluence=iso:item1} Suppose that each $A_{i}$ and each $B_{k}$ is finitely presentable. Then $A$ and $B$ are isomorphic if, and only if, the two sequences are confluent.
\item\label{t:confluence=iso:item2}  Suppose that each $A_{i}$ and each $B_{k}$ is finitely generated, and that each $a_{i}$ and  each $b_{k}$ is a monomorphism. Then $A$ and $B$ are isomorphic if, and only if, the two sequences are confluent.
\end{enumerate}
\end{theorem}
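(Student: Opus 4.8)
The ``if'' direction --- confluence implies isomorphism of the colimit objects --- is already supplied, in full generality and with no finiteness hypothesis, by Theorem~\ref{t:confluence-implies-iso}; so for both items the substance lies in the ``only if'' direction, which I plan to establish by a back-and-forth (zigzag) construction that reads the data of Definition~\ref{d:confluence} off a given isomorphism. Fix colimit cocones $(A,\a_i)$ and $(B,\b_k)$, and an isomorphism $\phi\colon A\to B$ with inverse $\psi\colon B\to A$. For item~\ref{t:confluence=iso:item2} I would first record the routine remark that every transition morphism $a_{ij}$ and $b_{kl}$, being a composite of monomorphisms, is itself a monomorphism; that is precisely what makes the finitely-generated hypothesis yield, for the two colimits under consideration, the factorisation property \eqref{eq:factorization} and the essential-uniqueness property \eqref{eq:essential-unique}. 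From that point on the two items are handled identically.

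The recursion alternately produces arrows $f_n\colon A_{i_n}\to B_{k_n}$ and $g_n\colon B_{k_n}\to A_{i_{n+1}}$. Starting from $i_1:=1$, I would apply \eqref{eq:factorization} to $\phi\circ\a_{i_n}\colon A_{i_n}\to B$ to lift it through the cocone $(B,\b_k)$, obtaining an index $k_n$ and an arrow $f_n$ with $\phi\circ\a_{i_n}=\b_{k_n}\circ f_n$; post-composing with a transition morphism where necessary, I can insist $k_n>k_{n-1}$ (for $n=1$, any $k_1\in\N$ will do), so that the two index sequences come out strictly increasing. Symmetrically, \eqref{eq:factorization} applied to $\psi\circ\b_{k_n}\colon B_{k_n}\to A$ yields $i_{n+1}>i_n$ and $g_n$ with $\psi\circ\b_{k_n}=\a_{i_{n+1}}\circ g_n$. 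These two identities I would carry along the recursion as invariants, for they are what keeps the emerging zigzag compatible with $\phi$ and $\psi$.

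The point where finiteness genuinely bites is in upgrading the commutativity relations \eqref{eq:comm1} and \eqref{eq:comm2} from ``true after composing with a colimit morphism'' to ``true on the nose''. For \eqref{eq:comm2}, the two invariants together with $\psi\circ\phi=\id$ give that $\b_{k_n}\circ(f_n\circ g_{n-1})$ and $\b_{k_n}\circ b_{k_{n-1}k_n}$ are both equal to $\b_{k_{n-1}}$; by \eqref{eq:essential-unique} there is then $l\geq k_n$ with $b_{k_n l}\circ(f_n\circ g_{n-1})=b_{k_n l}\circ b_{k_{n-1}k_n}$, and I would absorb this $b_{k_n l}$ by enlarging $k_n$ to $l$ and replacing $f_n$ by $b_{k_n l}\circ f_n$ --- an operation that plainly preserves the invariant attached to $f_n$ and now makes \eqref{eq:comm2} hold for the index $n-1$ (this correction is vacuous when $n=1$). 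The entirely analogous manoeuvre, run after $g_n$ has been produced and using the invariant $\phi\circ\a_{i_n}=\b_{k_n}\circ f_n$, enlarges $i_{n+1}$ so as to force $g_n\circ f_n=a_{i_n i_{n+1}}$, that is, \eqref{eq:comm1}. Once the full zigzag has been constructed, cofinality on each side is automatic, since $(i_n)_n$ and $(k_n)_n$ are strictly increasing sequences in $\N$; combined with \eqref{eq:comm1} and \eqref{eq:comm2} this is exactly a confluence as in Definition~\ref{d:confluence}, proving both items.

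I expect the only genuine obstacle to be bookkeeping: one has to sequence the recursion steps so that each index enlargement lands on a ``fresh'' index --- or at worst on one not yet entangled in an already-established relation --- and to verify at each enlargement that both invariants, and every commutativity relation proved at an earlier stage, survive unscathed. In fact each correction touches only the most recently created arrow, so nothing previously fixed is disturbed; none of this is deep. It is the kind of diagram-chase that the notation of Definition~\ref{d:confluence} is designed to make transparent, and Remark~\ref{rem:e} is just what lets me invoke \eqref{eq:essential-unique} freely, without first having to exhibit the factorisation to which the alternative form \eqref{eq:essential-uniquebis} appeals.
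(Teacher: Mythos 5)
Your proposal is correct and follows essentially the same route as the paper: the ``if'' direction is delegated to Theorem~\ref{t:confluence-implies-iso}, and the ``only if'' direction is the same back-and-forth recursion with the invariants $\phi\circ\a_{i_n}=\b_{k_n}\circ f_n$ and $\psi\circ\b_{k_n}=\a_{i_{n+1}}\circ g_n$, using \eqref{eq:factorization} to produce each arrow and \eqref{eq:essential-unique} to upgrade the commutativity relations from ``after composing with a colimit morphism'' to ``on the nose'' by enlarging the index just created. The only difference is organisational: the paper isolates this last step as a separate lemma (Lemma~\ref{l:weak-implies-eventual}) and packages the zigzag construction as a ``local isomorphism criterion'' (Lemma~\ref{l:local}) under hypotheses (H1)--(H4), whereas you inline the same reasoning.
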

Before turning to the proofs, we remark that extensions of Theorem \ref{t:confluence=iso} to more general directed diagrams are possible. Indeed, the case of all directed diagrams is closely related to Grothendieck's completion of a category under inductive systems (``ind-completion''), for which see \cite[VI and references therein]{johnstone}; cf.\ also Remark \ref{rem:ind} below. Such extensions, however, call for  generalised notions of confluence that are, as far as we can see, harder to apply. Since the case of sequences appears to be most common in concrete situations---see the examples in Section \ref{s:app}---we only deal with it in this note.
\section{Proof of Theorem \ref{t:confluence=iso}.}\label{s:proof}
\begin{theorem}\label{t:confluence-implies-iso}
Let $\seqA, \seqB$ be two sequences in $\C$ with colimits $(A,\a_{i})_{i\in \N}$ and $(B,\b_{k})_{k\in \N}$, respectively. If $\seqA, \seqB$  are confluent then $A$ and $B$ are isomorphic.
\end{theorem}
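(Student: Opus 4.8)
The plan is to build mutually inverse morphisms between the colimit objects directly from the confluence data, using nothing but the universal property. Write $(A,\a_{i})_{i\in\N}$ and $(B,\b_{k})_{k\in\N}$ for the given colimit cocones of $\seqA$ and $\seqB$, and let $0<i_{1}<i_{2}<\cdots$, $0<k_{1}<k_{2}<\cdots$, together with $f_{n}\colon A_{i_{n}}\to B_{k_{n}}$ and $g_{n}\colon B_{k_{n}}\to A_{i_{n+1}}$, be the data witnessing confluence as in Definition \ref{d:confluence}.

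First I would observe that, since the indices $i_{n}$ are cofinal in $\N$, the subsequence $(A_{i_{n}},a_{i_{n}i_{n+1}})_{n\in\N}$ has the very same colimit, namely $(A,\a_{i_{n}})_{n\in\N}$; likewise $(B_{k_{n}},b_{k_{n}k_{n+1}})_{n\in\N}$ has colimit $(B,\b_{k_{n}})_{n\in\N}$. This is elementary for sequences (any cocone on a cofinal subsequence uniquely extends to a cocone on the full sequence, and conversely), and is the only point in the whole argument that needs a word of justification. Next I would check that $(\b_{k_{n}}\circ f_{n})_{n\in\N}$ is a cocone on $(A_{i_{n}},a_{i_{n}i_{n+1}})_{n\in\N}$: the identity $\b_{k_{n+1}}\circ f_{n+1}\circ a_{i_{n}i_{n+1}}=\b_{k_{n}}\circ f_{n}$ follows by substituting $a_{i_{n}i_{n+1}}=g_{n}\circ f_{n}$, then $f_{n+1}\circ g_{n}=b_{k_{n}k_{n+1}}$, and finally the cocone law $\b_{k_{n+1}}\circ b_{k_{n}k_{n+1}}=\b_{k_{n}}$. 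By the universal property of $A$ there is therefore a unique $\varphi\colon A\to B$ with $\varphi\circ\a_{i_{n}}=\b_{k_{n}}\circ f_{n}$ for all $n$. Symmetrically, $(\a_{i_{n+1}}\circ g_{n})_{n\in\N}$ is a cocone on $(B_{k_{n}},b_{k_{n}k_{n+1}})_{n\in\N}$ --- using $b_{k_{n}k_{n+1}}=f_{n+1}\circ g_{n}$, $g_{n+1}\circ f_{n+1}=a_{i_{n+1}i_{n+2}}$, and $\a_{i_{n+2}}\circ a_{i_{n+1}i_{n+2}}=\a_{i_{n+1}}$ --- so there is a unique $\psi\colon B\to A$ with $\psi\circ\b_{k_{n}}=\a_{i_{n+1}}\circ g_{n}$ for all $n$.

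Finally I would verify that $\psi\circ\varphi=\id_{A}$ and $\varphi\circ\psi=\id_{B}$. Composing the defining relations gives $\psi\circ\varphi\circ\a_{i_{n}}=\psi\circ\b_{k_{n}}\circ f_{n}=\a_{i_{n+1}}\circ g_{n}\circ f_{n}=\a_{i_{n+1}}\circ a_{i_{n}i_{n+1}}=\a_{i_{n}}$ for every $n$; thus $\psi\circ\varphi$ and $\id_{A}$ induce the same map out of the colimit $A$ (both satisfy the same instance of its universal property, the colimit morphisms $\a_{i_{n}}$ being jointly epic), whence $\psi\circ\varphi=\id_{A}$. An entirely parallel computation, $\varphi\circ\psi\circ\b_{k_{n}}=\varphi\circ\a_{i_{n+1}}\circ g_{n}=\b_{k_{n+1}}\circ f_{n+1}\circ g_{n}=\b_{k_{n+1}}\circ b_{k_{n}k_{n+1}}=\b_{k_{n}}$, yields $\varphi\circ\psi=\id_{B}$. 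Hence $\varphi$ is an isomorphism $A\to B$ with inverse $\psi$.

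I do not anticipate a genuine obstacle: the argument is a forced diagram chase, and the mild subtlety --- passing to the cofinal subsequences so that the colimits $A$ and $B$ can be computed over the indices $i_{n}$ and $k_{n}$ --- is standard. It is worth noting, and the write-up should make explicit, that this direction uses neither finite presentability nor finite generation nor the monomorphism hypothesis; those enter only in the converse, proved afterwards.
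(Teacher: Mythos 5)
Your proposal is correct and follows essentially the same route as the paper's proof: pass to the cofinal subsequences, use the confluence identities to exhibit $(\b_{k_{n}}\circ f_{n})$ and $(\a_{i_{n+1}}\circ g_{n})$ as cocones, obtain the two induced maps from the universal property, and check they are mutually inverse by the same diagram chases. The only difference is that the paper spells out in detail the claim that a cofinal subsequence has the same colimit (the step you flag as the one needing justification), which you state but only sketch.
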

\begin{proof}
Let $0<i_{1}<i_{2}<\cdots$ and $0<k_{1}<k_{2}<\cdots$  be indices such that  $f_{n}\colon A_{i_{n}}\to B_{k_{n}}$ and $g_{n}\colon B_{k_{n}}\to A_{i_{n+1}}$, $n\in\N$,  are a confluence between the given sequences.  Consider the  subsequence $S:=(A_{i_{n}},a_{{i_n}i_{n+1}})_{n\in\N}$. A proof of the next claim may be obtained applying  standard technical results on final functors, see e.g.\ \cite[0.11]{ar} or \cite[VI.1.5]{johnstone}. We prefer to give a direct  proof here.
\begin{claimnonum}The colimit of \textup{(}the diagram associated with\textup{)} $S$ is $(A,\a_{i_{n}})_{n\in \N}$.
\end{claimnonum}
\begin{proof}[Proof of  Claim]
Clearly $(A,\a_{i_{n}})_{n\in \N}$ is a cocone for $S$. Let $(A',\a'_{i_{n}})_{n\in \N}$ be an arbitrary cocone for $S$.  Let us set $I:=\{i_{n}\}_{n\in \N}$ and $J:=\N\setminus I$. The poset $I$ is cofinal in $\N$, i.e.\ for any $j\in J$ there is $i\in I$ such that $i\geq j$. Let $\theta\colon J\to I$ be the function that associates with each $j\in J$ the minimum $i\in I$ with $i\geq j$. 
It is elementary to verify that 
\begin{align*}
K:=\left(A',\,\left\{\a'_{\theta(j)}\circ a_{j\theta(j)}\right\}_{j\in J}\bigcup\,\left\{\a'_{i_{n}}\right\}_{n\in\N}\right)
\end{align*}
is a cocone for $\seqA$. By the universal property of the colimit $(A,\a_{i})_{i\in \N}$, there exists a unique $\Cat$-arrow $h\colon A\to A'$ that factors all arrows in $K$ through $\{\a_{i}\}_{i\in \N}$.  Hence, \textit{a fortiori}, $h$ factors all $\{\a'_{i_{n}}\}_{n\in\N}$ through $\{\a_{i_{n}}\}_{n\in\N}$. It remains to show that $h$ is the unique $\Cat$-arrow that has the latter factorisation property.  If $k\colon A\to A'$ is another arrow such that $\a'_{i_{n}}=k\circ\a_{i_{n}}$ for every $n\in\N$, then $\a'_{\theta(j)}\circ a_{j\theta(j)}=k\circ\a_{\theta(j)}\circ a_{j\theta(j)}$ for $j\in J$. Upon noting that any $\a'_{i}$ is either of the form $\a'_{\theta(j)}$ for some $j \in J$, or else can be written in the form $\a'_{\theta(j)}\circ a_{j\theta(j)}$ for some $j \in J$, we infer that $k$ must be equal to $h$.
\end{proof}
Note now that $\left(B,\, b_{k_{n}}\circ f_{n}\right)_{n\in\N}$ is a cocone for the diagram associated with the sequence $S$.  For this it is sufficient to verify the commutativity of the  diagram below.
\begin{center}
\begin{tikzpicture}[descr/.style={fill=white}]
\matrix(m)[matrix of math nodes, row sep=3em, column sep=2.4em, ampersand replacement=\&]{
{A_{i_{n}}}		\& 	\& {A_{i_{n+1}}}	\& 	\&  \\
{B_{k_{n}}}		\& {\makebox{}}	\& {B_{k_{n+1}}}	\& {\ldots}	\&  {B}	\\
};
\draw [->] (m-1-1) edge  node[left] {$f_{n}$}  (m-2-1);
\draw [->] (m-2-1) edge  node[descr] {$g_{n}$}  (m-1-3);
\draw [->] (m-1-3) edge  node[right] {$f_{n+1}$}  (m-2-3);
\draw [->] (m-1-1) edge node[above] {$a_{i_{n},i_{n+1}}$} (m-1-3);
\draw [->] (m-2-1) edge node[below] {$b_{k_{n},k_{n+1}}$} (m-2-3);
\draw [->] (m-2-1) edge [bend right=30] node[below] {$\b_{k_{n}}$} (m-2-5);
\draw [->] (m-2-3) edge [bend right=20] node[below, xshift=-.9cm,yshift=.2cm] {$\b_{k_{n+1}}$} (m-2-5);
\end{tikzpicture}
\end{center}
We compute:
\begin{align*}
\b_{k_{n+1}}\circ f_{n+1}\circ a_{i_{n},i_{n+1}}&=\b_{k_{n+1}}\circ f_{n+1}\circ g_{n}\circ f_{n} & \text{by \eqref{eq:comm1}}  \\
&=\b_{k_{n+1}}\circ b_{k_{n},k_{n+1}}\circ f_{n}&\text{by \eqref{eq:comm2}}\\
&=\b_{k_{n}}\circ f_{n} & \text{by the commutativity of colimit}\\ &&\text{and transition morphisms.}
\end{align*}
Since, by the Claim above, $A$ is the universal cocone for the diagram associated with $S$, there must exist a unique  $f\colon A\to B$ that factors the family of arrows $\{\b_{k_{n}}\circ f_{n}\mid n\in \N\}$, i.e.\
\begin{align}
\b_{k_{n}}\circ f_{n}= f\circ \a_{i_{n}} \qquad\text{ for every }n\in \N.\label{eq:f-factors}
\end{align}
Arguing symmetrically, we obtain a unique $g\colon B\to A$ such that 
\begin{align}
\a_{i_{n+1}}\circ g_{n}= g\circ \b_{k_{n}} \qquad\text{ for every }n\in \N.\label{eq:g-factors}
\end{align}
Now, to prove that the composition $g\circ f$ is the identity on $A$, we compute for every $n\in\N$:
\begin{align*}
g\circ f\circ \a_{i_{n}}&= g\circ \b_{k_{n}}\circ f_{n} & \text{by \eqref{eq:f-factors}}\\
	&=\a_{i_{n+1}}\circ g_{n}\circ f_{n} & \text{by \eqref{eq:g-factors}}\\
	&=\a_{i_{n+1}}\circ a_{i_{n}i_{n+1}}& \text{by \eqref{eq:comm1}}\\
	&=\a_{i_{n}}.& 
\end{align*}
So $g\circ f$ factors all colimit morphisms of the colimit $(A,\a_{i_{n}})_{n\in \N}$ of $S$. But, by the universal property of colimits, there exists a unique morphism with this property, and it is the identity---hence $g\circ f=\id_{A}$.  The proof that  $f\circ g=\id_{B}$ is  symmetrical. Therefore $f$ and $g$ are isomorphisms, and the proof is complete. 
\end{proof}

The next technical lemma establishes a key finiteness property of directed colimits of diagrams all of whose objects satisfy condition \eqref{eq:essential-unique} with respect to the colimit. It asserts that any arrow $p\colon A_{i}\to A_{j}$ that equalises two colimit arrows must already equalise two transition maps. We will use this in the proof of  Lemma \ref{l:local} below.
\begin{lemma}\label{l:weak-implies-eventual}
Let $\diaA$ be a directed diagram in $\C$, and let $(A, \a_{i})_{i\in I}$ be its colimit.  Suppose that  every $\C$-arrow $f\colon A_{i}\to A$ satisfies \eqref{eq:essential-unique} w.r.t. $\diaA$.  Consider any \C-arrow $p\colon A_{i}\to A_{j}$ with $i\leq j$. If $\a_{i}=\a_{j}\circ p$, then there exists $i_{0}\geq j$ such that $a_{ji_{0}}\circ p=a_{ii_{0}}$.
\end{lemma}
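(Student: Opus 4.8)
The plan is to derive the conclusion from a single application of the essential-uniqueness condition \eqref{eq:essential-unique}, with no appeal to \eqref{eq:factorization}. Set $f:=\a_{i}\colon A_{i}\to A$; by hypothesis $f$ satisfies \eqref{eq:essential-unique} with respect to $\diaA$. The key observation is that $f$ has two factorisations through $\a_{j}$: on the one hand, the assumption $\a_{i}=\a_{j}\circ p$ reads $f=\a_{j}\circ p$; on the other hand, the cocone commutativity relations for the colimit $(A,\a_{i})_{i\in I}$ give, since $i\leq j$, the identity $f=\a_{i}=\a_{j}\circ a_{ij}$. Hence $p$ and $a_{ij}$ are two $\C$-arrows $A_{i}\to A_{j}$ that become equal to $f$ after postcomposition with $\a_{j}$.

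Now I would invoke \eqref{eq:essential-unique} for the arrow $f$, applied at the stage $A_{j}$, taking $g':=p$ and $g'':=a_{ij}$. This produces an index $i_{0}\geq j$ with $a_{ji_{0}}\circ p=a_{ji_{0}}\circ a_{ij}$. Since the transition morphisms of a directed diagram compose functorially, $a_{ji_{0}}\circ a_{ij}=a_{ii_{0}}$, and therefore $a_{ji_{0}}\circ p=a_{ii_{0}}$, which is precisely the assertion of the lemma.

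There is no real obstacle in this argument; it is a direct consequence of the definitions. The only point worth flagging is that it genuinely uses the formulation \eqref{eq:essential-unique} rather than the more common \eqref{eq:essential-uniquebis}: neither of the two arrows $p$, $a_{ij}$ is handed to us as the factorisation supplied by \eqref{eq:factorization}, so we need the version of essential uniqueness that compares two \emph{arbitrary} factorisations. This is exactly the feature highlighted in Remark \ref{rem:e}, and it is what allows the lemma to apply to any diagram whose colimit arrows merely satisfy \eqref{eq:essential-unique}, with no finite-presentability hypothesis in force.
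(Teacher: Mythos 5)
Your proof is correct and is essentially identical to the paper's: both apply \eqref{eq:essential-unique} to $f:=\a_{i}$ with the two factorisations $\a_{j}\circ p$ and $\a_{j}\circ a_{ij}$, then use $a_{ji_{0}}\circ a_{ij}=a_{ii_{0}}$. Your closing observation about why \eqref{eq:essential-unique} rather than \eqref{eq:essential-uniquebis} is needed matches the point made in Remark \ref{rem:e}.
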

\begin{proof}
The colimit morphism $\a_{i}\colon A_{i}\to A$ factors through each transition morphism $a_{ij}$ with $i\leq j$, in symbols,
\begin{align}\label{eq:one}
\a_{i}=\a_{j}\circ a_{ij}.
\end{align}
By hypothesis:
 \begin{align}\label{eq:two}
 \a_{i}=\a_{j}\circ p.
 \end{align}
 Applying   \eqref{eq:essential-unique} to (\ref{eq:one}--\ref{eq:two}) with $f:=\a_{i}$, $g':=a_{ij}$, and $g'':=p$, we infer that there must be $i_{0}\geq j$ such that $a_{ji_{0}}\circ p=a_{ji_{0}}\circ a_{ij}=a_{ii_{0}}$.
\end{proof}
\begin{lemma}[Local isomorphism criterion]\label{l:local}
Let $\seqA, \seqB$ be two sequences in $\C$ with colimit objects  $(A,\a_{i})_{i\in\N}$ and $(B,\b_{k})_{k\in\N}$, respectively. Suppose that $A$ and $B$ are isomorphic. Suppose further that the following hold for each $i,k\in \N$.
\begin{enumerate}[{\rm (H1)}]
\item\label{l:local:item1} Every $\C$-arrow $A_{i}\to B$ satisfies \eqref{eq:factorization} w.r.t. $\seqB$.
\item\label{l:local:item2} Every $\C$-arrow $A_{i}\to A$ satisfies \eqref{eq:essential-unique} w.r.t. $\seqA$.  
\item\label{l:local:item3} Every $\C$-arrow $B_{k}\to A$ satisfies \eqref{eq:factorization} w.r.t. $\seqA$.
\item\label{l:local:item4} Every $\C$-arrow $B_{k}\to B$ satisfies \eqref{eq:essential-unique} w.r.t. $\seqB$.  
\end{enumerate}
Then $\seqA$ and  $\seqB$  are confluent.
\end{lemma}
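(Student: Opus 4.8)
The plan is to run a standard back-and-forth (``zig-zag'') construction between the two sequences, steered by a fixed isomorphism $\varphi\colon A\to B$ with inverse $\psi\colon B\to A$. I would build the data of Definition \ref{d:confluence}---indices $0<i_{1}<i_{2}<\cdots$, $0<k_{1}<k_{2}<\cdots$ and arrows $f_{n}\colon A_{i_{n}}\to B_{k_{n}}$, $g_{n}\colon B_{k_{n}}\to A_{i_{n+1}}$---by recursion on $n$, maintaining throughout the two ``compatibility with the isomorphism'' identities $\b_{k_{n}}\circ f_{n}=\varphi\circ\a_{i_{n}}$ and $\a_{i_{n+1}}\circ g_{n}=\psi\circ\b_{k_{n}}$. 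These identities are exactly what makes the recursion self-perpetuating, and the commutativity relations \eqref{eq:comm1}--\eqref{eq:comm2} will fall out of the construction as by-products.

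\emph{Forward step.} Start with $i_{1}:=1$, and suppose $i_{n}$ has been constructed together with (for $n>1$) an arrow $g_{n-1}$ satisfying $\a_{i_{n}}\circ g_{n-1}=\psi\circ\b_{k_{n-1}}$. Apply (H1) to $\varphi\circ\a_{i_{n}}\colon A_{i_{n}}\to B$: property \eqref{eq:factorization} yields an index, which we may enlarge (by composing with transition maps) to some $k>k_{n-1}$, and an arrow $h\colon A_{i_{n}}\to B_{k}$ with $\b_{k}\circ h=\varphi\circ\a_{i_{n}}$. This $h$ need not satisfy $h\circ g_{n-1}=b_{k_{n-1}k}$, so I rigidify it: using $\varphi\circ\psi=\id_{B}$ and the inductive identities one checks that $\b_{k}\circ(h\circ g_{n-1})=\b_{k_{n-1}}=\b_{k}\circ b_{k_{n-1}k}$, so Lemma \ref{l:weak-implies-eventual}, applied to the sequence $\seqB$ (whose colimit arrows satisfy \eqref{eq:essential-unique} by (H4)) with $p:=h\circ g_{n-1}$, produces $k_{n}\geq k$ with $b_{kk_{n}}\circ h\circ g_{n-1}=b_{k_{n-1}k_{n}}$. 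Set $f_{n}:=b_{kk_{n}}\circ h$. Then $f_{n}\circ g_{n-1}=b_{k_{n-1}k_{n}}$, which is \eqref{eq:comm2} for $n-1$, while $\b_{k_{n}}\circ f_{n}=\b_{k}\circ h=\varphi\circ\a_{i_{n}}$, so the first bookkeeping identity is restored. (For $n=1$ one just takes $f_{1}:=h$, $k_{1}:=k$; there is no $g_{0}$ and nothing to rigidify.)

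\emph{Backward step.} Symmetrically, from $i_{n},k_{n},f_{n}$ with $\b_{k_{n}}\circ f_{n}=\varphi\circ\a_{i_{n}}$, apply (H3) to $\psi\circ\b_{k_{n}}\colon B_{k_{n}}\to A$ to obtain some $i>i_{n}$ and $g\colon B_{k_{n}}\to A_{i}$ with $\a_{i}\circ g=\psi\circ\b_{k_{n}}$; using $\psi\circ\varphi=\id_{A}$ one gets $\a_{i}\circ(g\circ f_{n})=\a_{i_{n}}=\a_{i}\circ a_{i_{n}i}$, whence Lemma \ref{l:weak-implies-eventual} applied to $\seqA$ (via (H2)) yields $i_{n+1}\geq i$ with $a_{ii_{n+1}}\circ g\circ f_{n}=a_{i_{n}i_{n+1}}$. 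Set $g_{n}:=a_{ii_{n+1}}\circ g$; then $g_{n}\circ f_{n}=a_{i_{n}i_{n+1}}$, i.e.\ \eqref{eq:comm1}, and $\a_{i_{n+1}}\circ g_{n}=\a_{i}\circ g=\psi\circ\b_{k_{n}}$. Alternating the two steps indefinitely produces all the required arrows; the indices are strictly increasing by construction, and \eqref{eq:comm1}, \eqref{eq:comm2} hold for every $n$, so $\seqA$ and $\seqB$ are confluent.

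The one genuine subtlety---the step I expect to be the crux---is that the factorizations delivered by \eqref{eq:factorization} in (H1) and (H3) are in no way canonical, so a freshly obtained arrow will generically clash with the arrows already built (that is, fail the commutativity relation needed for the previous stage). Lemma \ref{l:weak-implies-eventual}, which is precisely the operative content of the essential-uniqueness hypotheses (H2) and (H4), is what removes the clash: it upgrades ``equal after post-composition with a colimit morphism'' to ``equal after post-composition with a transition morphism'', at the sole cost of advancing the index---and that cost is free here, since we want the indices to grow anyway. Everything else is routine diagram-chasing with the universal property of the colimits.
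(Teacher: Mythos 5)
Your proof is correct and follows essentially the same route as the paper's: a back-and-forth recursion steered by the fixed isomorphism, maintaining the invariants $\b_{k_{n}}\circ f_{n}=\varphi\circ\a_{i_{n}}$ and $\a_{i_{n+1}}\circ g_{n}=\psi\circ\b_{k_{n}}$, using \eqref{eq:factorization} via (H1) and (H3) to produce candidate factorisations and Lemma \ref{l:weak-implies-eventual} to correct them so that \eqref{eq:comm1}--\eqref{eq:comm2} hold. The only differences are cosmetic: you spell out the backward step rather than appealing to symmetry, and you invoke (H4) for the forward-step application of Lemma \ref{l:weak-implies-eventual} (which is indeed the hypothesis actually needed there, the paper's text citing (H2) at that point notwithstanding).
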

\begin{proof}
Let $f\colon A\to B$ be an isomorphism. By induction on $n\in \N$, we define   sequences of integers $0<i_{1}<\cdots <i_{n}<\cdots$ and $0<k_{1}<\cdots<k_{n}<\cdots$, and morphisms
\begin{align}
f_{n}&\colon A_{i_{n}}\to B_{k_{n}}\label{eq:fn}\\
g_{n}&\colon B_{k_{n}}\to A_{i_{n+1}},\label{eq:gn}
\end{align}
such that the following equalities  hold: 
\begin{align}
f\circ \a_{i_{n}}&=\b_{k_{n}}\circ f_{n}\label{eq:ind1}\\
f^{-1}\circ \b_{k_{n}}&=\a_{i_{n+1}}\circ g_{n}\label{eq:ind2}.
\end{align}  
For $n=1$, we set $i_{1}=1$. By (H\ref{l:local:item1}), there exists an arrow $f_{1}\colon A_{i_{1}}\to B_{k}$, for some $k \in \N$, that factors $f\circ \a_{i_{1}}$. Let us set $k_{1}:=k$; hence  $f\circ \a_{i_{1}}=\b_{k_{1}}\circ f_{1}$, and \eqref{eq:ind1} is satisfied.  Next, by  (H\ref{l:local:item3}) there  exists an arrow $g_{1}\colon B_{k_{1}}\to A_{i}$, for some $i \in \N$, that factors $f^{-1}\circ \b_{k_{1}}$.  Set $i_{2}:=i$; then  $f^{-1}\circ \b_{k_{1}}=\a_{i_{2}}\circ g_{1}$, and \eqref{eq:ind2} holds, too.

For the inductive step, assume that such arrows $f_{u},g_{u}$ have been defined for $1\leq u\leq n-1$.
Since, by (H\ref{l:local:item1}),  $f\circ \a_{i_{n}}\colon A_{i_{n}}\to B$ satisfies \eqref{eq:factorization} with respect to  $(B_{k},b_{k})$, there exist $l\in \N$ and $d_{n}\colon A_{i_{n}}\to B_{l}$ such that 
\begin{align}
f\circ \a_{i_{n}}=\b_{l}\circ d_{n}\label{eq:GU-confluenceA}.
\end{align}
We may safely assume that $l> k_{n-1}$.  For, in the contrary case, it suffices to  replace $d_{n}$ with its composition with a transition arrow $b_{ll'}$ for some $l'>k_{n-1}$, and to rewrite the right-hand side of \eqref{eq:GU-confluenceA} as $\b_{l}\circ d_{n}=\b_{l'}\circ b_{ll'}\circ d_{n}$.  By the induction hypothesis,  $g_{n-1}\colon B_{k_{n-1}}\to A_{i_{n}}$ satisfies the following equality:
\begin{align}
f^{-1}\circ \b_{k_{n-1}}=\a_{i_{n}}\circ g_{n-1}\label{eq:GU-confluenceB}.
\end{align}
Composing both sides of \eqref{eq:GU-confluenceA}  on the right with $g_{n-1}$ we obtain $f\circ \a_{i_{n}}\circ g_{n-1}=\b_{l}\circ d_{n}\circ g_{n-1}$. Using \eqref{eq:GU-confluenceB}, the latter equation yields $f\circ f^{-1}\circ \b_{k_{n-1}} =\b_{l}\circ d_{n}\circ g_{n-1}$, and hence
\begin{align}\label{eq:eventual}
\b_{k_{n-1}} =\b_{l}\circ d_{n}\circ g_{n-1}.
\end{align}
In light of (H\ref{l:local:item2}), we may apply Lemma \ref{l:weak-implies-eventual} to \eqref{eq:eventual} (with $p= d_{n}\circ g_{n-1}$) to deduce that there exists $k'\in\N$, $k'\geq l$, such that
\begin{align}
b_{k_{n-1}k'}=b_{l,k'}\circ d_{n}\circ g_{n-1}.\label{eq:ev-commutativity-f-g}
\end{align}  
Upon setting   $k_{n}:=k'$ and $f_{n}:=b_{lk_{n}}\circ d_{n}$, \eqref{eq:ev-commutativity-f-g} reads
\begin{align}
b_{k_{n-1}k_{n}}=f_{n}\circ g_{n-1},\label{eq:commutativity-f-g}
\end{align}  
so that \eqref{eq:comm2} in Definition \ref{d:confluence} is satisfied.  To conclude this part of the inductive step, we show that  $f_{n}$ satisfies \eqref{eq:ind1}:
\begin{align*}
f\circ \a_{i_{n}}&=\b_{l}\circ d_{n}& \text{by }\eqref{eq:GU-confluenceA},\\
&=\b_{k_{n}}\circ b_{lk_{n}}\circ d_{n}& \text{by the commutativity of tran-}\\&&\text{sition and colimit morphisms,}\\
&=\b_{k_{n}}\circ f_{n}& \text{by the definition of } f_{n}.
\end{align*}
The inductive construction of the arrow $g_{n}\colon B_{k_{n}}\to A_{i_{n+1}}$, and the verification that \eqref{eq:comm1} in Definition \ref{d:confluence} holds, are symmetric to the above. The proof is complete.
\end{proof}
\begin{proof}[End of Proof of Theorem \ref{t:confluence=iso}]
The right-to-left implications in parts 1 and 2 of the statement are given by Theorem \ref{t:confluence-implies-iso}. For the left-to-right implications, it is enough to check that all hypotheses in Lemma \ref{l:local} are satisfied. 
Concerning part 1, by assumption each $A_{i}$ is finitely presentable,   $B$ is the colimit of $\seqB$, and $A$ is the colimit of $\seqA$; hence every $\C$-arrow $A_{i}\to B$ satisfies \eqref{eq:factorization} w.r.t.\ $\seqB$, and every $\C$-arrow $A_{i}\to A$ satisfies \eqref{eq:essential-unique} w.r.t.\ $\seqA$.  Conditions (H\ref{l:local:item3}--H\ref{l:local:item4}) in Lemma \ref{l:local} hold by a symmetric argument.  The fact that Lemma \ref{l:local} applies under the assumptions of part 2 is equally straightforward. The theorem is proved.
\end{proof}

\section{Applications.}\label{s:app}
\subsection{Varieties of algebras.}\label{ss:var} For background on the universal-algebraic notions used in this subsection we refer to \cite[Chapter 2]{jacobson}. By a \emph{variety of algebras} \cite[Section 2.8]{jacobson} we mean, as usual, the class of all algebraic structures of the same type that satisfy some set of equations; equivalently, by Birkhoff's Variety Theorem  \cite[Theorem 2.14]{jacobson}, a class that is closed under homomorphic images, subalgebras, and direct products. We regard varieties as  categories in the obvious manner, with the homomorphisms as morphisms. If $\V$ is any variety, there is a forgetful functor $\U\colon\V\to\Set$ that carries algebras to underlying sets, and homomorphisms to functions. Then $\U$ has a left adjoint $\F\colon \Set \to \V$ called the \emph{free functor}, and the algebra $\F{(S)}$ is called the \emph{free algebra} (in $\V$) \emph{generated by the set $S$}. Free algebras are characterised to within isomorphism by the usual universal mapping property; see \cite[Theorem 2.9]{jacobson}. Let us write $\F_n$ for $\F{(S)}$, $S$ of finite cardinality $n\geq 0$.  A \emph{congruence} on an algebra $A$ in $\V$ is an equivalence relation on $A$ that is also a subalgebra of $A\times A$. By the first isomorphism theorem \cite[p. 62]{jacobson}, congruences on $A$ are in bijection with  kernels of onto homomorphisms with domain $A$. An algebra in $\V$ is \emph{finitely generated} if it is isomorphic to a quotient of $\F_{n}$, for $n\geq 0$ an integer. A congruence on $A$ is \emph{finitely generated} if it is the intersection of all congruences containing some finite subset of $A\times A$. Adapting a standard notion from group theory, an algebra in $\V$ is   \emph{finitely presented} if it is (isomorphic to) a quotient of $\F_{n}$ modulo some finitely generated congruence, for some integer $n\geq 0$. It is a standard fact that varieties of algebras are closed under directed colimits \cite[Theorem 2.8]{jacobson}.
\begin{lemma}\label{lem:var}In any  variety of algebras, an algebra is finitely presentable \textup{(}respectively, finitely generated\textup{)} in the sense of Gabriel and Ulmer if, and only if, it is finitely presented \textup{(}respectively, finitely generated\textup{)} in the usual algebraic sense.
\end{lemma}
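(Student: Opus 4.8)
The plan is to establish the ``finitely generated'' equivalence first, then deduce the ``finitely presented'' one, obtaining its harder half from the material already in place. Write $\V$ for the variety, regarded as a category, $\U\colon\V\to\Set$ for the forgetful functor, and $\F_{n}$ for the free algebra on $n$ generators. Three preliminary facts are needed, each classical or immediate from the universal property of free algebras and from \cite[Theorem 2.8]{jacobson}: (i) a homomorphism in $\V$ is a monomorphism if, and only if, it is injective on underlying sets (for the nontrivial direction, test monicity against the two homomorphisms $\F_{1}\to A$ picking out a pair of elements with the same image, using $\hom(\F_{1},A)\cong\U(A)$); (ii) $\U$ preserves directed colimits, i.e.\ directed colimits in $\V$ are computed ``elementwise''; (iii) each $\F_{n}$ is finitely presentable in the Gabriel--Ulmer sense, since $\hom(\F_{n},-)\cong\U(-)^{n}$ naturally and both $\U$ and $(-)^{n}\colon\Set\to\Set$ preserve directed colimits (directed colimits commute with finite products in $\Set$); a fortiori $\F_{n}$ is Gabriel--Ulmer finitely generated. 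From (i) and (ii) one gets that the colimit morphisms of a directed diagram in $\V$ with monic transition morphisms are themselves monic.

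Next, the ``finitely generated'' case. If $A$ is finitely generated as an algebra, pick a surjection $q\colon\F_{n}\twoheadrightarrow A$ with kernel congruence $\theta$; given a directed diagram $(B_{i},b_{ij})$ with monic transition morphisms, colimit $(B,\b_{i})$, and an arbitrary $f\colon A\to B$, finite presentability of $\F_{n}$ factors $f\circ q$ as $\b_{i}\circ h$. The arrow $\b_{i}$ is injective, being a colimit morphism of a diagram with monic transition morphisms; since $\b_{i}\circ h=f\circ q$ annihilates $\theta$, so does $h$, whence $h=g\circ q$ for some $g\colon A\to B_{i}$ and, cancelling the epimorphism $q$, $f=\b_{i}\circ g$. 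This is \eqref{eq:factorization}, and \eqref{eq:essential-unique} holds trivially because $\b_{i}$ is injective. Hence $A$ is Gabriel--Ulmer finitely generated. Conversely, if $A$ is Gabriel--Ulmer finitely generated, realise it as the directed colimit of its finitely generated subalgebras under inclusion (a directed system, since the subalgebra generated by two finite sets is finitely generated); applying \eqref{eq:factorization} to $\id_{A}$ gives a finitely generated subalgebra $A_{0}$ and $g\colon A\to A_{0}$ whose composite with the inclusion $A_{0}\hookrightarrow A$ is $\id_{A}$, which forces $A=A_{0}$.

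The implication ``finitely presented algebra $\Rightarrow$ Gabriel--Ulmer finitely presentable'' is argued the same way, now for an arbitrary directed diagram $(B_{i},b_{ij})$: writing $\theta$ as the congruence generated by finitely many pairs, factor $f\circ q$ as $\b_{i}\circ h$ via finite presentability of $\F_{n}$, then use (ii) to find a single transition morphism $b_{ij}$ identifying all those finitely many pairs, so that $b_{ij}\circ h$ annihilates $\theta$ and therefore factors through $q$, yielding \eqref{eq:factorization}; and \eqref{eq:essential-unique} for $A$ follows from \eqref{eq:essential-unique} for $\F_{n}$ after cancelling $q$. For the reverse implication---the crux---let $A$ be Gabriel--Ulmer finitely presentable. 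It is then Gabriel--Ulmer finitely generated, so by the previous paragraph $A=\F_{n}/\theta$ with $q\colon\F_{n}\twoheadrightarrow A$, and the point is to show $\theta$ finitely generated. Let $\{\theta_{\mu}\}$ be the directed family of finitely generated subcongruences of $\theta$, put $A_{\mu}:=\F_{n}/\theta_{\mu}$ with the canonical surjections $p_{\mu\nu}\colon A_{\mu}\to A_{\nu}$ and $\pi_{\mu}\colon A_{\mu}\to A$, and verify by the universal property that $(A,\pi_{\mu})$ is the colimit of $(A_{\mu},p_{\mu\nu})$. Applying \eqref{eq:factorization} to $\id_{A}$ produces some $\mu$ and $g\colon A\to A_{\mu}$ with $\pi_{\mu}\circ g=\id_{A}$. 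Since $\theta_{\mu}$ is finitely generated, $A_{\mu}$ is Gabriel--Ulmer finitely presentable, so \eqref{eq:essential-unique} applies at $A_{\mu}$ to the two morphisms $\id_{A_{\mu}}$ and $g\circ\pi_{\mu}$ into $A_{\mu}$, both of which compose with $\pi_{\mu}$ to $\pi_{\mu}$; this gives $\nu\geq\mu$ with $p_{\mu\nu}=p_{\mu\nu}\circ g\circ\pi_{\mu}$. Using $\pi_{\mu}=\pi_{\nu}\circ p_{\mu\nu}$ and cancelling the epimorphism $p_{\mu\nu}$ one obtains $p_{\mu\nu}\circ g\circ\pi_{\nu}=\id_{A_{\nu}}$; thus $\pi_{\nu}$ is a split monomorphism, hence injective, and being also surjective it is an isomorphism, which forces $\theta_{\nu}=\theta$. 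So $\theta$ is finitely generated and $A$ is a finitely presented algebra.

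The anticipated main obstacle is this last argument: extracting the algebraic finiteness of $\theta$ from the purely diagrammatic conditions \eqref{eq:factorization} and \eqref{eq:essential-unique}. The working idea is the two-step manoeuvre above---first use \eqref{eq:factorization} on $\id_{A}$ to get a section of some $\pi_{\mu}$, and only then, once $A_{\mu}$ is known to be finitely presentable (by the already-proved easy implication), use \eqref{eq:essential-unique} to collapse some $\pi_{\nu}$ to an isomorphism. Everything else reduces to routine bookkeeping with the elementwise description of directed colimits in a variety and the first isomorphism theorem.
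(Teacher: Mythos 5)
Your proof is correct. Note, however, that the paper does not actually prove Lemma \ref{lem:var}: its ``proof'' consists entirely of citations to Gabriel--Ulmer, to Ad\'amek--Rosick\'y \cite[Theorem 3.12 and Proposition 3.11]{ar}, and to Johnstone. So what you have produced is a self-contained argument for a statement the authors delegate to the literature. Your route is essentially the standard one from those references, but with one worthwhile variation in the hard direction (Gabriel--Ulmer finitely presentable $\Rightarrow$ finitely presented algebra): the usual argument first proves that finitely presentable objects are closed under retracts and then observes that $A$ is a retract of some finitely presented algebra appearing in a canonical colimit; you instead work inside the specific directed system $\F_{n}/\theta_{\mu}$ indexed by the finitely generated subcongruences of $\theta$, and use \eqref{eq:essential-unique} at the stage $A_{\mu}$ (legitimately, since $A_{\mu}$ is finitely presented as an algebra and hence, by your already-established easy implication, Gabriel--Ulmer finitely presentable) to force some $\pi_{\nu}$ to be a split monomorphism and hence an isomorphism. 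This makes the retract argument concrete and avoids a separate closure-under-retracts lemma; the price is the bookkeeping with the congruence lattice, all of which you carry out correctly (the family $\{\theta_{\mu}\}$ is directed with union $\theta$, the colimit identification $\colim A_{\mu}\cong\F_{n}/\theta$ holds because directed colimits in a variety are computed elementwise, and the cancellation of the surjections $q$ and $p_{\mu\nu}$ is valid since surjective homomorphisms are epimorphisms). Your preliminary facts (i)--(iii), the reduction of \eqref{eq:essential-unique} for $A$ to \eqref{eq:essential-unique} for $\F_{n}$, and the trivialisation of \eqref{eq:essential-unique} in the monic-transition case are all sound.
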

\begin{proof}This was originally proved in \cite[7.6 or 9.3; see also p.\ 64]{gu}. It is also proved more directly in \cite[Theorem 3.12 and Proposition 3.11]{ar}, and in \cite[Proposition VI.2.2]{johnstone}. 
\end{proof}
\noindent It is well known that in any variety of algebras each algebra is representable either as a directed colimit of finitely presentable algebras (see e.g. \cite[Lemma VI.2.2]{johnstone}), or as a directed colimit of its finitely generated subalgebras (see e.g. \cite[Theorem 2.7]{jacobson}) with inclusion maps as transition homomorphisms. We omit the  proof of the fact that one can specialise these results as follows.
\begin{enumerate}[(I)]
\item\label{eq:app1} Each finitely generated algebra in a variety is the directed colimit of a sequence of finitely presented algebras and surjective transition homomorphisms.
\item\label{eq:app2} Each countable algebra in a variety is the directed colimit of a sequence of finitely generated subalgebras and injective transition homomorphisms.
\end{enumerate}
Facts (\ref{eq:app1}--\ref{eq:app2}) should clarify the scope of:
\begin{corollary}\label{cor:var}In any variety of algebras, suppose two algebras $A$ and $B$ are the directed colimits of  the sequences $\seqA$ and $\seqB$, respectively.  
\begin{enumerate}
\item\label{cor:var:item1} Suppose that each $A_{i}$ and each $B_{k}$ is a finitely presented algebra. Then $A$ and $B$ are isomorphic if, and only if, the two sequences are confluent.
\item\label{cor:var:item2}  Suppose that each $A_{i}$ and each $B_{k}$ is a finitely generated algebra, and that each $a_{i}$ and  each $b_{k}$ is an injective homomorphisms. Then $A$ and $B$ are isomorphic if, and only if, the two sequences are confluent.
\end{enumerate}
\end{corollary}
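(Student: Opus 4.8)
The plan is to obtain Corollary \ref{cor:var} as a straightforward specialisation of Theorem \ref{t:confluence=iso}, with Lemma \ref{lem:var} serving as the bridge between the categorial finiteness notions of Gabriel and Ulmer and the familiar algebraic ones. First I would record the routine preliminaries. Any variety $\V$, regarded as a category with homomorphisms as morphisms, is locally small, since $\hom(A,B)$ is a subset of the set of all functions from the underlying set of $A$ to that of $B$. Moreover varieties are closed under directed colimits, so the assumption that $A$ and $B$ are the directed colimits of the given sequences is meaningful inside $\V$, and $(A,\a_i)_{i\in\N}$, $(B,\b_k)_{k\in\N}$ are genuine colimit cocones there; this is exactly the standing hypothesis of Theorem \ref{t:confluence=iso}.

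For part \ref{cor:var:item1}, by hypothesis each $A_i$ and each $B_k$ is a finitely presented algebra in the usual algebraic sense, hence by Lemma \ref{lem:var} each is finitely presentable in the sense of Gabriel and Ulmer. Thus the hypotheses of part \ref{t:confluence=iso:item1} of Theorem \ref{t:confluence=iso} are met, and we conclude that $A$ and $B$ are isomorphic if, and only if, the two sequences are confluent.

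For part \ref{cor:var:item2} two translations are needed. By Lemma \ref{lem:var}, ``finitely generated algebra'' coincides with ``finitely generated object in the sense of Gabriel and Ulmer'', so each $A_i$ and each $B_k$ is finitely generated in the required sense. It remains to observe that each $a_i$ and each $b_k$, being an injective homomorphism, is a monomorphism of $\V$: injective homomorphisms are left-cancellable, hence monic. (Alternatively, the forgetful functor $\U\colon\V\to\Set$ is a right adjoint, so it preserves monomorphisms, and monomorphisms in $\Set$ are the injections; but only the easy direction is needed here.) With these two remarks the hypotheses of part \ref{t:confluence=iso:item2} of Theorem \ref{t:confluence=iso} are satisfied, and the conclusion follows.

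I do not expect any genuine obstacle: the entire content of the corollary is carried by Theorem \ref{t:confluence=iso} together with Lemma \ref{lem:var}, while facts (\ref{eq:app1})--(\ref{eq:app2}) serve only to indicate the typical situations in which the finiteness hypotheses can be arranged. The one point meriting a line of justification is the identification of the transition homomorphisms as monomorphisms of the variety, and even there only the trivial implication (injective $\Rightarrow$ monic) is required.
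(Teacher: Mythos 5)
Your proposal is correct and coincides with the paper's own proof, which likewise combines Theorem \ref{t:confluence=iso} with Lemma \ref{lem:var} and the observation that homomorphisms between algebras are monomorphisms precisely when they are injective. Your remark that only the easy direction (injective $\Rightarrow$ monic) is needed is a fair refinement of the paper's citation of the full equivalence.
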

\begin{proof}Apply Theorem \ref{t:confluence=iso} and Lemma \ref{lem:var}, together with the easy observation that a homomorphism between algebras is a monomorphism if, and only if, it is an injective function \cite[Proposition 2.3]{jacobson}.\end{proof}
\begin{remark}In a number of classical papers, Derry, Kurosh, and Mal'cev developed a technique to construct complete isomorphisms invariants for countable torsion-free Abelian groups of finite rank in terms of equivalence classes of families of matrices. The hypothesis of finite rank was later removed by Fuchs; we refer to his textbook account \cite[\S 45]{fuchs} for details and references.\footnote{It has long been recognised that these results can hardly be considered as a usable classification of countable torsion-free Abelian groups, because the equivalence relation in question is as complicated as the original isomorphism problem. See Fuchs' remarks on \cite[p.\ 157]{fuchs}.} We sketch here a proof of the fact that the existence of invariants of this sort essentially follows from Corollary \ref{cor:var}. Indeed,  we can represent each countable torsion-free Abelian group as the directed colimit of some sequence
\begin{align}\label{eq:groups}
\Z^{r_{1}}\overset{\iota_{1}}{\longrightarrow} \Z^{r_{2}}\overset{\iota_{2}}\longrightarrow\cdots \Z^{r_{i}}\overset{\iota_{i}}{\longrightarrow} \cdots
\end{align}
with each $\iota_{i}$ an injective group homomorphism. The  approximation \eqref{eq:groups} is a special case of \eqref{eq:app2}, up to  the Fundamental Theorem of Abelian groups that a finitely generated torsion-free Abelian group is free of finite rank.  Corollary \ref{cor:var}\eqref{cor:var:item2} now tells us that two sequences as in (\ref{eq:groups}) have isomorphic groups as colimit objects if, and only if, they are confluent. Choosing $\Z$-module bases,  representing each group homomorphism featuring in a confluence diagram as a matrix with integer entries,   and writing down the commutativity conditions (\ref{eq:comm1}--\ref{eq:comm2}) in terms of products of such matrices, one obtains an equivalence relation on sequences of matrices that  yields a complete isomorphism invariant closely related to the classical ones referred to above.
\end{remark}
\begin{remark}In the recent paper \cite{BusCabMun},  Busaniche, Cabrer, and Mundici prove an isomorphism criterion for directed colimits of sequences of finitely presented lattice-ordered Abelian groups with an order-unit, where all transition morphisms are assumed to be surjective. Their proofs make use of the non-trivial representation of these structures in terms of  compact polyhedra and piecewise linear maps. (For further information on the topic, the interested reader may consult \cite[and references therein]{marra_mundici, marspa12, marraspadaAPAL}.) Nonetheless, their isomorphism criterion \cite[Theorem 3.3]{BusCabMun} is an immediate consequence of Corollary \ref{cor:var}\eqref{cor:var:item1}, given that by a well-known theorem of Mundici \cite[Theorem 3.9]{Mundici86} the category in question is equivalent to a variety of algebras, namely, Chang's MV-algebras \cite{cdm}. In fact, our results show that the assumption in  \cite[Theorem 3.3]{BusCabMun} that all transition morphisms be surjective may be dropped.
\end{remark}
\subsection{The Bratteli-Elliott Isomorphism Criterion for dimension groups, and  AF $C^{*}$-algebras.}\label{ss:bratteli} For background on dimension groups see \cite{effros, goodearl}. A \emph{partially ordered Abelian group} is an Abelian group $G$ (which we always write additively) equipped with a \emph{translation-invariant} 
partial order: $x\leq y$ implies $x+t\leq y+t$ for each $x,y,t\in G$.  A morphism of partially ordered Abelian groups is a group homomorphism $G\to H$ that is order-preserving. The \emph{positive cone} of $G$ is $G^{+}:=\{x\in G \mid x \geq 0\}$. One says that $G$ is \emph{directed} if it is generated (as a group) by $G^{+}$; and that it is \emph{unperforated} (or has \emph{isolated order}) if $nx \geq 0$ for some $x \in G$ and $n \in \N$ implies $x \geq 0$. Further, $G$ has (\emph{Riesz}) \emph{interpolation} if whenever $x_1, x_2, y_1, y_2 \in G$ satisfy  $x_{i} \leq y_j$, there exists $z \in G$ such that $x_i\leq z\leq y_{j}$.  A \emph{dimension group} is any partially ordered Abelian group that is directed, unperforated, and has interpolation. The \emph{category of dimension groups} is the full subcategory of the category of partially ordered Abelian groups whose objects are dimension groups. A \emph{simplicial group} is (any partially ordered Abelian group isomorphic to) the free Abelian group $\Z^{r}$ of rank $r \geq 0$, equipped with the pointwise order inherited from $\Z$, i.e.\ with the positive cone $(\Z^{r})^{+}:=\{(z_1,\ldots,z_{r})\mid z_i \geq 0 \text{ for } i=1,\ldots,r\}$. It is clear that each simplicial group is a dimension group. The category of dimension groups has directed colimits, and they are computed as directed colimits of Abelian groups, where the colimit group is equipped with the positive cone obtained as the set-theoretic colimit of the positive cones in the diagram; see  \cite[1.15]{goodearl}. Specifically, consider a sequence 
\begin{align}\label{eq:dimgrp}
\Z^{r_{1}}\overset{h_{1}}{\longrightarrow} \Z^{r_{2}}\overset{h_{2}}\longrightarrow\cdots \Z^{r_{i}}\overset{h_{i}}{\longrightarrow} \cdots
\end{align}
of simplicial groups and order-preserving homomorphisms. If $D$ is the colimit of (\ref{eq:dimgrp}) in the category of Abelian groups, and $\overline{h}_{i}\colon \Z^{r_{i}}\to D$ are the colimit maps,  then setting $D^{+}:=\bigcup_{i\in\N}\overline{h}_i((\Z^{r_i})^{+})$ yields a positive cone on $D$ that makes it into a partially ordered Abelian group. It is elementary to check that, with this order, $D$ is in fact a dimension group. 
\begin{lemma}\label{lem:fpsimplicial}In the category of dimension groups, each simplicial group is finitely presentable in the sense of Gabriel and Ulmer.
\end{lemma}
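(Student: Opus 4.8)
The plan is to compute the hom-functor out of a simplicial group explicitly and thereby reduce finite presentability to the commutation of finite limits with directed colimits in $\Set$. First I would record a natural description of $\hom{(\Z^r,-)}$: for a dimension group $G$, a morphism $\Z^r\to G$ is determined by the images $g_1,\dots,g_r\in G$ of the standard free generators $e_1,\dots,e_r$ of $\Z^r$; since a group homomorphism between partially ordered Abelian groups is order-preserving exactly when it carries the positive cone of its source into that of its target (using translation-invariance of the orders), and since $(\Z^r)^+$ consists precisely of the non-negative integer combinations of the $e_m$, such a tuple $(g_1,\dots,g_r)$ underlies a (unique) morphism of dimension groups if, and only if, every $g_m$ lies in $G^+$. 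Hence $\hom{(\Z^r,G)}\cong (G^+)^r$, naturally in $G$.

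Next I would show that the positive-cone functor $G\mapsto G^+$ from dimension groups to $\Set$ preserves directed colimits. Let $(D_i,h_{ij})$ be a directed diagram of dimension groups with colimit $(D,\overline{h}_i)$; as recalled just before the lemma (see \cite[1.15]{goodearl}), the underlying group of $D$ is the colimit of the underlying groups of the $D_i$, and $D^+=\bigcup_i \overline{h}_i(D_i^+)$. From the first fact, and the way filtered colimits of Abelian groups are computed on underlying sets, if $x\in D_i^+$ and $y\in D_j^+$ satisfy $\overline{h}_i(x)=\overline{h}_j(y)$ in $D$, then $h_{ik}(x)=h_{jk}(y)$ for some $k\geq i,j$, and this common element lies in $D_k^+$ because $h_{ik}$ is order-preserving; together with the second fact, which gives surjectivity of the canonical comparison map onto $D^+$, this shows that $D^+$, equipped with the maps induced by the $\overline{h}_i$, is the colimit in $\Set$ of the sets $D_i^+$. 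Since finite products commute with directed colimits in $\Set$, the composite $G\mapsto (G^+)^r$ preserves directed colimits; by the natural isomorphism of the previous paragraph, so does $\hom{(\Z^r,-)}$, which is exactly the assertion that $\Z^r$ is finitely presentable in the sense of Gabriel and Ulmer. Alternatively, one can verify \eqref{eq:factorization} and \eqref{eq:essential-unique} directly for an arbitrary morphism $f\colon\Z^r\to D$: \eqref{eq:factorization} by pushing the finitely many elements $f(e_1),\dots,f(e_r)\in D^+$ back to a common stage $D_i$, and \eqref{eq:essential-unique} by equalising the finitely many pairs $g'(e_m),g''(e_m)$ at a common later stage.

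I do not expect any real obstacle here; the only point demanding care is to keep the three ambient categories---dimension groups, Abelian groups, and sets---apart and to invoke the appropriate behaviour of directed colimits in each: that colimits of dimension groups restrict to colimits of underlying Abelian groups (which is what makes the essential-uniqueness coincidences show up at a finite stage), and that the colimit positive cone is exactly the union of the transported positive cones (which is what lets a morphism into $D$, i.e.\ a tuple of positive elements, descend to a finite stage). Both are contained in the description of directed colimits of dimension groups recalled before the lemma.
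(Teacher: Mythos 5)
Your proposal is correct. It is a repackaging of the same computation the paper performs, but in a more structural form: you identify $\hom{(\Z^{r},-)}$ naturally with $G\mapsto (G^{+})^{r}$ and then check that the positive-cone functor preserves directed colimits, using exactly the two facts from \cite[1.15]{goodearl} that the paper also relies on---that the underlying group of the colimit is the colimit of underlying groups, and that the colimit positive cone is $\bigcup_{i}\overline{h}_{i}(D_{i}^{+})$. The paper instead verifies \eqref{eq:factorization} and \eqref{eq:essential-unique} directly for a given $f\colon\Z^{n}\to A$, pulling the finitely many images $f(e_{k})$ back to a common stage for \eqref{eq:factorization} and using freeness of $\Z^{n}$ for \eqref{eq:essential-unique}; this is precisely the ``alternative'' you sketch in your last sentences. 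What your main route buys is a cleaner statement of where the finiteness enters (finite products commute with directed colimits in $\Set$) and an explicit isolation of the one nontrivial input, namely that $(-)^{+}$ preserves directed colimits, for which your injectivity argument (equalise at a later stage, positivity is preserved by the order-preserving transition maps) is the right one. One small point worth making explicit in the direct alternative: to get \eqref{eq:essential-unique} you equalise the finitely many pairs $g'(e_{m}),g''(e_{m})$ at a common later stage and then conclude $b_{ij}\circ g'=b_{ij}\circ g''$ because these are group homomorphisms agreeing on a generating set---the paper leaves this equally implicit.
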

\begin{proof}Let $A$ be a dimension group that is the colimit object of a directed diagram $\diaA$ of dimension groups and order-preserving group homomorphisms, and write $\a_{i}\colon A_{i}\to A$ for the colimit morphisms. Consider an order-preserving group homomorphism $f\colon\Z^{n}\to A$, for some integer $n \in \N$. Let $e_{1},\ldots,e_{n}$ denote the  standard $\Z$-module basis of $\Z^{n}$. By \cite[1.15]{goodearl}, for each $k=1,\ldots,n$ there exists $i_{k}\in I$ and $e'_{k} \in A_{i_{k}}^{+}$ such that $a_{i_{k}}(e'_{k})=f(e_{k})$. By the directedness of the index set, there exist $i_{0}$ and $e'_{1},\ldots,e'_{n}\in A_{i_{0}}^{+}$ such that  $a_{i_{k}}(e'_{k})=f(e_{k})$ holds for each $k=1,\ldots,n$. Since simplicial groups are  free Abelian groups, there exists a unique group homomorphism $g\colon \Z^{n}\to A_{i_{0}}$ that satisfies $f(e_{i})=e'_{i}$, for each $k=1,\ldots,n$. Clearly $f=\a_{i_{0}}\circ g$. Note that $g$ preserves the positive cone of $\Z^{n}$ (i.e.\ is \emph{positive}), and therefore it is order-preserving. This proves (\ref{eq:factorization}). It is also clear that $g$ is essentially unique in the sense of (\ref{eq:essential-unique}), because any two group homomorphisms $g',g''\colon \Z^{n}\to A_{i}$ are uniquely determined by their action on $e_{1},\ldots,e_{n}$.
\end{proof}
\begin{corollary}[The Bratteli-Elliott Isomorphism Criterion]\label{cor:elliottbratteli}Two sequences of simplicial groups and order-preserving group homomorphisms as in \textup{(\ref{eq:dimgrp})} have isomorphic colimit dimension groups if, and only if, they are confluent. 
\end{corollary}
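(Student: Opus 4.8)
The plan is to obtain the Bratteli–Elliott criterion as a direct instance of Theorem \ref{t:confluence=iso}(\ref{t:confluence=iso:item1}), using Lemma \ref{lem:fpsimplicial} to supply the required finiteness hypothesis. Concretely, the two sequences in (\ref{eq:dimgrp}) are sequences in the category of dimension groups whose objects are simplicial groups $\Z^{r_i}$, and whose colimits (computed as in the paragraph following (\ref{eq:dimgrp}), with the colimit positive cone) are dimension groups by the remarks preceding the lemma. By Lemma \ref{lem:fpsimplicial} every simplicial group is finitely presentable in the sense of Gabriel and Ulmer, so each object $\Z^{r_i}$ and each object $\Z^{r'_k}$ appearing in the two sequences is finitely presentable. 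Hence part 1 of Theorem \ref{t:confluence=iso} applies verbatim, and it yields precisely that the two colimit dimension groups are isomorphic if, and only if, the sequences are confluent.

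First I would spell out that the category of dimension groups is locally small: a morphism between partially ordered Abelian groups is in particular a function between the underlying sets, so $\hom(G,H)$ is a subset of $H^{G}$ and therefore a set. Next I would recall from the text (citing \cite[1.15]{goodearl}) that the colimit of a sequence of simplicial groups and order-preserving homomorphisms, taken in the category of dimension groups, exists and has underlying group the colimit of the underlying Abelian groups, with positive cone the union of the images of the $(\Z^{r_i})^{+}$; thus the hypothesis of Theorem \ref{t:confluence=iso} that the two sequences admit colimit objects is automatically satisfied. Then I would invoke Lemma \ref{lem:fpsimplicial} to check the finite-presentability hypothesis of part 1, and conclude by quoting Theorem \ref{t:confluence=iso}(\ref{t:confluence=iso:item1}).

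There is essentially no obstacle here: the corollary is a formal consequence of results already in hand, and the only thing to verify is that the abstract hypotheses match the concrete setting. If anything deserves a sentence of care, it is the identification of ``confluence'' in the sense of Definition \ref{d:confluence} with the explicit back-and-forth diagram of integer matrices that appears in the classical Bratteli–Elliott statement: a confluence between two sequences as in (\ref{eq:dimgrp}) is, upon choosing the standard $\Z$-module bases of the simplicial groups and representing each order-preserving homomorphism $f_n$, $g_n$ by its matrix (with nonnegative integer entries on the relevant cones), nothing but a commuting system of rectangular integer matrices satisfying (\ref{eq:comm1}--\ref{eq:comm2}), which is the familiar form of the criterion. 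I would therefore write the proof as: the category of dimension groups is locally small and has the relevant colimits; simplicial groups are finitely presentable by Lemma \ref{lem:fpsimplicial}; apply Theorem \ref{t:confluence=iso}(\ref{t:confluence=iso:item1}); and, optionally, remark on the matrix reformulation of confluence as above.
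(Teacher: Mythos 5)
Your proposal is correct and follows exactly the paper's own route: the corollary is obtained by combining Lemma \ref{lem:fpsimplicial} with Theorem \ref{t:confluence=iso}(\ref{t:confluence=iso:item1}), and the extra checks you mention (local smallness, existence of the colimits via \cite[1.15]{goodearl}) are exactly the routine verifications the paper leaves implicit.
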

\begin{proof} Lemma \ref{lem:fpsimplicial} and Theorem \ref{t:confluence=iso}\eqref{t:confluence=iso:item1}.
\end{proof}
\begin{remark}\label{rem:ind}The converse of Lemma \ref{lem:fpsimplicial} is also true, though not as easy to prove: it can be shown that it follows from the important Effros-Handelman-Shen Theorem \cite[Theorem 2.2]{EHS} that each dimension group is the colimit object of a directed diagram of simplicial groups,  the colimit being taken in the category of partially ordered Abelian groups. Thus, the characteristic approximation property of dimension groups is a notable instance of the much-studied abstract theory of accessing objects in a category via directed colimits of finitely presentable objects \cite{gu, ar}. In this connection, an obvious question for further research  is whether dimension groups are the ind-completion of the category of simplicial groups. We plan to pursue this question elsewhere, and for now refer the interested reader  to \cite{gw}, where related issues are considered from the perspective of universal algebra. \end{remark}
We close this paper with some remarks on AF  $C^{*}$ algebras; see e.g.\ \cite{goodearlnotes, Rordametal} for background. Recall that a \emph{$C^{*}$-algebra} is a (not necessarily commutative) algebra over $\CC$, equipped with a norm $\norm{\cdot}$ and an involution $*$, such that $A$ is complete with respect to $\norm{\cdot}$, and satisfies $\norm{ab}\leq\norm{a}\norm{b}$ and $\norm{a^*a}=\norm{a}^{2}$ for all $a,b\in A$. We always  assume $C^{*}$-algebras to be \emph{unital}, i.e.\ to come with a multiplicative identity.\footnote{Although  dimension groups in the above were not equipped with an order unit, we prefer to discuss unital $C^{*}$-algebras here to simplify references to the literature.}  Morphisms of $C^{*}$-algebras are taken to be the \emph{$*$-homomorphisms}, i.e.\ the (unit-preserving) algebra morphisms that commute with the involution. It is proved in \cite[16.3]{goodearl} that arbitrary directed colimits exist in the category of $C^{*}$-algebras.
We write $\M_n:=\M_{n}{(\CC)}$ for the set of all $n\times n$ matrices with complex entries. Then $\M_n$ is a $C^*$-algebra with the involution and the norm obtained from identifying it with the bounded linear operators on the Hilbert space $\CC^{n}$. In particular, the involution is the conjugate transpose involution that takes a matrix $(x_{ij})$ to $(\overline{x}_{ji})$, where $\overline{x}$ is the conjugate of $x \in \CC$. The $C^{*}$-algebras $\M_{n_{1}}\times \cdots  \times \M_{n_{t}}$, for integers $n_{i},t\geq 1$, are defined on the product algebra using componentwise operations and the supremum norm. By a standard result (\cite[1.5]{goodearlnotes}, \cite[7.1.5]{Rordametal}), up to an isomorphism the non-trivial $C^{*}$-algebras that are finite-dimensional (as algebras) are isomorphic to $\M_{n_{1}}\times \cdots  \times \M_{n_{t}}$ for some choice of the integers $n_{i}$. In a seminal paper, Bratteli \cite[1.1]{bratteli} defined a $C^{*}$-algebra $A$ to be  \emph{approximately finite-dimensional} (or an \emph{AF $C^*$-algebra}, for short) if it satisfies $A=\overline{\bigcup_{i\in\N}A_{i}}$ for some inclusion-increasing subsequence of finite-dimensional subalgebras $A_{i}$, where $\overline{\,\cdot\,}$ denotes the norm-closure. It is known \cite[16.3]{goodearlnotes} that this is the same as defining AF $C^{*}$-algebras as the directed colimits of sequences of  finite-dimensional $C^{*}$-algebras (with arbitrary transition $*$-homomorphisms).

The \emph{Bratteli Isomorphism Criterion}  \cite[2.7]{bratteli} asserts that two AF $C^{*}$-algebras $A=\overline{\bigcup_{i\in\N}A_{i}}$ and $B=\overline{\bigcup_{j\in\N}B_{j}}$ are isomorphic if, and only if, the two approximating sequences $\{A_{i}\}$ and $\{B_{j}\}$ of finite-dimensional $C^{*}$-algebras and inclusions are confluent in the sense of Definition \ref{d:confluence}. It is important to emphasise that \emph{Bratteli's criterion does not follow from our Theorem \ref{t:confluence=iso}.} This is because in the category of $C^{*}$-algebras (or even in the full subcategory of AF $C^{*}$-algebras) a finite-dimensional algebra is  in general neither  finitely generated nor finitely presentable in the sense of Gabriel and Ulmer. Indeed, consider a $*$-homomorphism $h\colon C\to \overline{\bigcup_{i\in\N}A_{i}}$, with $C$ finite-dimensional, and pick a set of matrix units $\{e_{pq}^{t}\}$ in $C$, for integers $1\leq t \leq r$ and $1\leq p,q\leq n_{t}$. (See \cite[1.5]{bratteli}, \cite[16]{goodearlnotes}, and \cite[7.1.1]{Rordametal} for background on matrix units.) Writing $\a_{k}\colon A_{k}\to A$ for the   colimit $*$-homomorphisms, there \emph{need not} exist an integer $k \in \N$ such that the images $h(e_{pq}^{t})$ of the chosen matrix units are contained in $A_{k}$:  by Bratteli's intrinsic characterisation of AF $C^{*}$-algebras \cite[2.2]{bratteli}, the most that can be said is that there exist an integer $k\in \N$, elements $f_{pq}^{t}$ in $A_{k}$, and a real number $\epsilon > 0$,   such that $\|f_{pq}^{t}-h(e_{pq}^{t})\|<\epsilon$. But  by the universal property of matrix units \cite[p.\ 110]{Rordametal}, the required essentially unique factorisation of $h$ only exists if the stronger containment condition $\{h(e_{pq}^{t})\}\subseteq A_{k}$ can be satisfied for some $k\in \N$. The problem here is that, unlike the situation for varieties of algebras, directed colimits in the category of $C^{*}$-algebras are not computed as the set-theoretic colimit with additional structure; see \cite[16.2]{goodearlnotes}. Even though dimension groups are not defined as a variety of algebras, their directed colimits are computed as in sets, as we recalled above. This offers a genuine simplification with respect to AF $C^{*}$-algebras, and explains why upon passing from AF $C^{*}$-algebras to dimension groups by taking Elliott's ordered $K_{0}$ (see \cite[7.3.4]{Rordametal}) one can obtain the easy proof above of Corollary \ref{cor:elliottbratteli}.

\smallskip
\noindent {\bf Acknowledgments.}  Both authors acknowledge  partial support from the Italian National Research Project  2010FP79LR.  Parts of this article were written while the authors were kindly hosted by the Argentinian National Research Council (CONICET) in Buenos Aires within the European FP7-IRSES project \emph{MaToMUVI} (PIEF-GA-2009-247584).  The second-named author gratefully acknowledges  partial support by a Marie Curie Intra-European Fellowship within the project \emph{ADAMS} (PIEF-GA-2011-299071). The first-named author acknowledges  partial support by the Italian FIRB ``Futuro in Ricerca'' grant RBFR10DGUA.

\end{document}